\theoremstyle{plain}
\newtheorem{definition}{Definition}
\newtheorem{thm}[definition]{Theorem}
\newtheorem{lem}[definition]{Lemma}
\newtheorem{cor}[definition]{Corollary}
\newtheorem{rei}[definition]{Example}
\newcommand{\diag}{{\rm diag}}
\newcommand{\Mat}{{\rm Mat}}
\newcommand{\Det}{{\rm Det}}
\newcommand{\rdet}{{\rm rdet}}
\newcommand{\Tr}{{\rm Tr}}
\newcommand{\sgn}{{\rm sgn}}
\begin{document}
\title[]{Capelli elements of the group algebra}
\author[N. Yamaguchi]{Naoya Yamaguchi}
\date{\today}
\keywords{Capelli identity; Capelli element; group algebra.}
\subjclass[2010]{Primary 20C05; Secondary 15A15.}

\maketitle

\begin{abstract}
Inspired by the Capelli identities for group determinants obtained by T\^oru Umeda, 
we give a basis of the center of the group algebra of any finite group by using Capelli identities for irreducible representations.

The Capelli identities for irreducible representations are modifications of the Capelli identity. 
These identities lead to Capelli elements of the group algebra. 
These elements construct a basis of the center of the group algebra. 
\end{abstract}

\section{\bf{INTRODUCTION}}
The Capelli identity is analogous to the product formula for the determinant in the Weyl algebra. 
The identity leads to the Capelli element. 
It is known that the Capelli element is a central element in the universal enveloping algebra of $\mathfrak{gl}_{n}$.

On the other hand, An Huang gave Capelli-type identities associated with the quaternions and the octonions \cite{H}. 
Inspired by his results, T\^oru Umeda gave Capelli identities for group determinants \cite{U}. 
There are Capelli identities for irreducible representations in the background of the Capelli identities for group determinants.

In this paper, 
we give a basis of the center of the group algebra of any finite group by using Capelli identities for irreducible representations. 
These identities lead to Capelli elements of the group algebra. 
These elements construct a basis.

First, we explain our motivation.

\subsection{Motivation}

Let $G$ be a finite group, 
$\widehat{G}$ a complete set of irreducible representations of $G$ over $\mathbb{C}$, 
$\mathbb{C}G = \left\{ \sum_{g \in G} x_{g} g \: \vert \: x_{g} \in \mathbb{C} \right\}$ the group algebra, 
and $Z(\mathbb{C}G)$ the center of $\mathbb{C}G$. 
The following theorem is easily proven from Schur's orthogonal relations.

\begin{thm}\label{thm:1.1.1}
Let $\chi_{\varphi}$ be the character of $\varphi \in \widehat{G}$. 
The set 
$$
\left\{ \Tr{ \left( \sum_{g \in G} \varphi(g) g \right) } \: \vert \: \varphi \in \widehat{G} \right \} 
= \left\{ \sum_{g \in G} \chi_{\varphi}(g) g \: \vert \: \varphi \in \widehat{G} \right\}
$$
is a basis of $Z(\mathbb{C}G)$ 
where we omit the numbering of the element of the basis. 
\end{thm}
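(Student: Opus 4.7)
The plan is to prove three properties of the elements $z_\varphi := \sum_{g \in G} \chi_\varphi(g)\, g$, one for each $\varphi \in \widehat{G}$: that each $z_\varphi$ is central in $\mathbb{C}G$, that the family $\{z_\varphi\}_{\varphi \in \widehat{G}}$ is linearly independent, and that its cardinality equals $\dim_{\mathbb{C}} Z(\mathbb{C}G)$. Together these give the claimed basis.

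For centrality, I would use that $\chi_\varphi$ is a class function, so $\chi_\varphi(g)$ depends only on the conjugacy class of $g$. Writing $K_{C} := \sum_{g \in C} g$ for the class sum associated to a conjugacy class $C$ and choosing a representative $g_{C} \in C$, this lets me rewrite $z_\varphi = \sum_{C} \chi_\varphi(g_{C})\, K_{C}$. Each $K_{C}$ is well known to lie in $Z(\mathbb{C}G)$, so $z_\varphi$ does too. The same decomposition supplies the dimension count: the class sums $\{K_{C}\}_{C}$ form a basis of $Z(\mathbb{C}G)$, so $\dim_{\mathbb{C}} Z(\mathbb{C}G)$ equals the number of conjugacy classes of $G$, which coincides with $|\widehat{G}|$.

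What remains is linear independence. The coordinates of $z_\varphi$ in the basis $\{K_{C}\}_{C}$ are exactly the entries of the character table $X = (\chi_\varphi(g_{C}))_{\varphi, C}$. I would then invoke Schur's orthogonality relations for irreducible characters to conclude that $X$ is invertible; for instance, the first orthogonality relation $\sum_{C} |C|\, \chi_\varphi(g_{C})\, \overline{\chi_\psi(g_{C})} = |G|\, \delta_{\varphi \psi}$ shows that the rows of $X$ are orthogonal, hence linearly independent. Because the square character table is invertible, the $z_\varphi$ are linearly independent over $\mathbb{C}$.

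The only substantive input is the invertibility of the character table, and since this is precisely what Schur's orthogonality relations provide (and the theorem statement explicitly anticipates using them), no genuine obstacle is expected; the argument should be short and self-contained.
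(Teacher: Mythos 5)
Your proof is correct and follows exactly the route the paper intends: the paper offers no written proof, stating only that the theorem ``is easily proven from Schur's orthogonal relations,'' and your argument (centrality via class sums, dimension count via the number of conjugacy classes, linear independence via invertibility of the character table from the first orthogonality relation) is the standard way to carry that out.
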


At this point, we have a simple question. 
Is the set 
$\left\{ \det{ \left( \sum_{g \in G} \varphi(g) g \right) } \: \vert \: \varphi \in \widehat{G} \right\}$ 
a basis of $Z(\mathbb{C}G)$?
Our main result gives an answer.

\subsection{\bf{Main result}}

Let $z$ be a complex variable, 
$|G|$ the order of $G$, 
$m = \deg{\varphi}$, 
$\alpha = \frac{|G|}{m}$, 
$u_{i}(z) = \alpha(m - i) - z$, 
$u^{(i)}(z) = u_{m}(z) u_{m - 1}(z) \cdots u_{m - i + 1}(z)$, 
$\det{}$ the column determinant, 
and the Capelli element for $\varphi$ of the group algebra 
$$
\overline{C}^{\varphi}(z) = \det{\left( \sum_{g \in G} \varphi(g) g + \alpha \left( 
\begin{bmatrix} 
m - 1 & & & \\ 
 & m - 2 & & \\
& & \ddots & \\ 
& & & 0 
\end{bmatrix} 
\right) - z I_{m} \right)} \in \mathbb{C}[z] \otimes \mathbb{C}G. 
$$

Then we can prove the following relation.

\begin{thm}\label{thm:1.2.1}
We have 
$$
\overline{C}^{\varphi}(z) = u^{(m)}(z) + \Tr{\left( \sum_{g \in G} \varphi(g) g \right)} u^{(m - 1)}(z)
$$
\end{thm}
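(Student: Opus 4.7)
The plan is first to establish a clean multiplicative rule for the matrix entries of $A := \sum_{g \in G}\varphi(g)g$, and then to use it to evaluate the column determinant combinatorially. Writing $A_{ij}A_{kl} = \sum_{g,h}\varphi(g)_{ij}\varphi(h)_{kl}\,gh$, making the substitution $p = gh$, and using the homomorphism property $\varphi(g^{-1}p) = \varphi(g^{-1})\varphi(p)$ separates the sum over $g$. Applying Schur's orthogonality relation $\sum_{g}\varphi(g)_{ij}\varphi(g^{-1})_{kr} = \alpha\delta_{jk}\delta_{ir}$ to the inner sum then collapses the whole expression to
\begin{equation*}
A_{ij}A_{kl} = \alpha\,\delta_{jk}\,A_{il},
\end{equation*}
which is the single identity driving the rest of the argument.

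Next, set $B := A + \alpha\,\diag(m-1,\ldots,0) - zI_m$, so $B_{ij} = A_{ij} + u_j(z)\delta_{ij}$. Since the $u_j(z)$ are scalars, expanding the column determinant by distributivity in each factor $B_{\sigma(j),j}$ and grouping terms by the subset $T\subseteq\{1,\ldots,m\}$ of columns on which the off-diagonal $A$-piece is chosen gives
\begin{equation*}
\overline{C}^{\varphi}(z) = \sum_{T\subseteq\{1,\ldots,m\}}\Bigl(\prod_{j\notin T}u_j(z)\Bigr)\det(A_T),
\end{equation*}
where $A_T$ is the principal submatrix of $A$ indexed by $T$, with the column determinant taken in the inherited order. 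To evaluate $\det(A_T)$ for $T = \{j_1<\cdots<j_k\}$, I would read the product $A_{\sigma(j_1),j_1}\cdots A_{\sigma(j_k),j_k}$ from left to right; the Kronecker delta in the key identity forces $\sigma(j_{i+1}) = j_i$ for each $i$, which pins $\sigma$ down as the $k$-cycle $(j_1,j_k,j_{k-1},\ldots,j_2)$ (of sign $(-1)^{k-1}$) and collapses the product to $\alpha^{k-1}A_{j_k,j_k}$. Hence $\det(A_T) = (-\alpha)^{|T|-1}A_{\max T,\max T}$ for $T\neq\emptyset$.

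Finally, I would organize the $T$-sum by $j^{\ast} := \max T$, treating $T\setminus\{j^{\ast}\}$ as a free subset of $\{1,\ldots,j^{\ast}-1\}$. The resulting inner sum evaluates by a binomial expansion to $\prod_{j=1}^{j^{\ast}-1}(u_j(z)-\alpha)$; the arithmetic identity $u_j(z)-\alpha = u_{j+1}(z)$ telescopes this into $\prod_{i=2}^{j^{\ast}}u_i(z)$, which combines with the outer factor $\prod_{j>j^{\ast}}u_j(z)$ to give exactly $u^{(m-1)}(z)$, independently of $j^{\ast}$. Summing over $j^{\ast}$ produces $u^{(m-1)}(z)\,\Tr(A)$, while the $T=\emptyset$ term contributes $u^{(m)}(z)$, giving the theorem. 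The main obstacles I anticipate are (i) the index-tracking that pins down the unique nonvanishing permutation contributing to each $\det(A_T)$, and (ii) the telescoping step, which is the one place where the specific diagonal shift $\alpha\,\diag(m-1,\ldots,0)$ is essential.
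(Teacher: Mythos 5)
Your proposal is correct, and it shares the same skeleton as the paper's proof up to a point: both derive the multiplication rule $A_{ij}A_{kl}=\alpha\,\delta_{jk}A_{il}$ from Schur orthogonality (the paper's Lemma~\ref{lem:4.1.2}), both expand the column determinant over subsets $T\subseteq[m]$, and both collapse each principal column determinant to $(-\alpha)^{|T|-1}A_{\max T,\max T}$ by tracking the unique surviving cycle. Where you genuinely diverge is in the final assembly. The paper stops at the form $\overline{C}^{\varphi}(z)=\sum_i a_iE^{\varphi}_{ii}+u^{(m)}(z)$, invokes the centrality of $\overline{C}^{\varphi}(z)$ (its Lemma~\ref{lem:4.1.4}) together with the multiplication rule to prove $a_p=a_q$ for all $p,q$, and then computes only $a_1$ --- which is easy because $T=\{1\}$ is the unique subset with maximum $1$, giving $a_1=\prod_{s\neq 1}u_s(z)=u^{(m-1)}(z)$ with no summation at all. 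You instead compute every coefficient $a_{j^{\ast}}$ directly, summing over all $T$ with $\max T=j^{\ast}$ via the binomial identity $\sum_{S\subseteq\{1,\dots,j^{\ast}-1\}}(-\alpha)^{|S|}\prod_{j\notin S}u_j(z)=\prod_{j<j^{\ast}}(u_j(z)-\alpha)$ and the telescoping relation $u_j(z)-\alpha=u_{j+1}(z)$. Your route is more computational but entirely self-contained: it never needs the centrality lemma, and it makes visible exactly why the specific shift $\alpha\,\diag(m-1,\dots,0)$ is forced (it is what makes the answer independent of $j^{\ast}$). The paper's route is shorter once centrality is in hand, and centrality is a fact it wants anyway for Corollary~\ref{cor:4.1.6}; but as a proof of the identity itself, yours is a clean and arguably more elementary alternative.
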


The above relation leads to the following corollary.

\begin{cor}\label{cor:1.2.2}
Suppose $k_{\varphi} \in \mathbb{C}$ such that $u^{(m - 1)}(k_{\varphi}) \neq 0$. 
Then, 
$$
\left\{ \overline{C}^{\varphi}(k_{\varphi}) \: \vert \: \varphi \in \widehat{G} \right\}
$$
is a basis of $Z(\mathbb{C}G)$. 
\end{cor}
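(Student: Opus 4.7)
The plan is to combine Theorem~\ref{thm:1.2.1} with the basis of $Z(\mathbb{C}G)$ provided by Theorem~\ref{thm:1.1.1}. Specialising $z = k_\varphi$ in Theorem~\ref{thm:1.2.1}, and reading the scalar $u^{(m)}(z)$ as the element $u^{(m)}(z)\cdot 1_{\mathbb{C}G}$ of $\mathbb{C}G$, one obtains the key decomposition
$$
\overline{C}^{\varphi}(k_\varphi) = u^{(m)}(k_\varphi)\cdot 1_{\mathbb{C}G} + u^{(m-1)}(k_\varphi) \sum_{g \in G}\chi_\varphi(g)\,g.
$$
Both summands are central, so $\overline{C}^\varphi(k_\varphi) \in Z(\mathbb{C}G)$. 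Since the indexed family has exactly $|\widehat{G}| = \dim Z(\mathbb{C}G)$ members, it suffices to prove linear independence.

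To check independence I would compute the change-of-basis matrix from the basis $\{\sum_g \chi_\psi(g)\,g\}_{\psi\in\widehat{G}}$ of Theorem~\ref{thm:1.1.1} to the family $\{\overline{C}^\varphi(k_\varphi)\}_{\varphi\in\widehat{G}}$. Column orthogonality of characters lets me write $1_{\mathbb{C}G} = \sum_{\psi\in\widehat{G}}(\deg\psi/|G|)\sum_{g\in G}\chi_\psi(g)\,g$; substituting this into the displayed identity expresses each $\overline{C}^\varphi(k_\varphi)$ as a $\mathbb{C}$-linear combination of the $\sum_g \chi_\psi(g)\,g$. The resulting matrix has the shape $D + u v^{\top}$: the diagonal $D$ records the coefficient $u^{(m-1)}(k_\varphi)$ of the $\varphi$-th basis vector in $\overline{C}^\varphi(k_\varphi)$ (nonzero by hypothesis), while the rank-one perturbation $u v^{\top}$ encodes the $1_{\mathbb{C}G}$ contribution, with $u$ built from the scalars $\deg\psi/|G|$ and $v$ from the values $u^{(m)}(k_\varphi)$.

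The matrix determinant lemma then yields
$$
\det(D + u v^{\top}) = \Bigl(\prod_{\varphi\in\widehat{G}} u^{(m-1)}(k_\varphi)\Bigr)\bigl(1 + v^{\top} D^{-1} u\bigr),
$$
and, using the identity $u^{(m)}(z) = u^{(m-1)}(z)(\alpha(m-1) - z)$, the correction factor $1 + v^{\top} D^{-1} u$ collapses to an elementary expression in the $k_\varphi$, $\deg\varphi$, and $|G|$. The diagonal product is nonzero by the hypothesis on $k_\varphi$, so the whole argument reduces to checking that this scalar correction factor is nonzero. This is the main (and only non-formal) obstacle: once it is verified, the change-of-basis matrix is invertible, the $\overline{C}^\varphi(k_\varphi)$ are linearly independent, and they therefore constitute a basis of $Z(\mathbb{C}G)$.
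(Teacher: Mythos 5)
Your overall route is the natural one and, as far as I can tell, the one the paper intends (the corollary is presented as an immediate consequence of Theorem~\ref{thm:1.2.1} together with Theorem~\ref{thm:1.1.1}, with no written proof): specialize at $z=k_\varphi$, expand $1_{\mathbb{C}G}=\sum_{\psi\in\widehat{G}}\frac{\deg\psi}{|G|}\sum_{g}\chi_\psi(g)g$, and test invertibility of the transition matrix $D+uv^{\top}$ via the matrix determinant lemma. All of that is correct. The gap is exactly the step you flag and do not carry out: showing the correction factor $1+v^{\top}D^{-1}u$ is nonzero. Using $u^{(m)}(z)=u^{(m-1)}(z)\bigl(\alpha(m-1)-z\bigr)$ and $\alpha=|G|/\deg\varphi$, that factor equals
$$
1+\sum_{\varphi\in\widehat{G}}(\deg\varphi-1)-\frac{1}{|G|}\sum_{\varphi\in\widehat{G}}\deg(\varphi)\,k_\varphi,
$$
and the hypothesis $u^{(m-1)}(k_\varphi)\neq 0$ does not force this to be nonzero. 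Indeed, for a one-dimensional $\varphi$ the hypothesis reads $u^{(0)}(k_\varphi)=1\neq 0$ and constrains $k_\varphi$ not at all. Concretely, for $G=\mathbb{Z}/2\mathbb{Z}=\{e,g\}$ with $k_{\mathrm{triv}}=k_{\mathrm{sgn}}=1$ one gets $\overline{C}^{\mathrm{triv}}(1)=g$ and $\overline{C}^{\mathrm{sgn}}(1)=-g$, which are linearly dependent although the hypothesis holds; already for the trivial group and $k=1$ the single element $\overline{C}(1)=0$ is not a basis of $\mathbb{C}$.

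So the obstacle you isolate is not merely ``non-formal'': it is a genuine counterexample to the statement as printed, and no argument can close the gap without strengthening the hypothesis. Your computation in fact points to the correct fix: either add the assumption that the displayed scalar is nonzero, or take $k_\varphi=\alpha(\deg\varphi-1)$ (the root of $u_1$), which gives $u^{(m)}(k_\varphi)=0$ while $u^{(m-1)}(k_\varphi)=(-\alpha)^{m-1}(m-1)!\neq 0$, so that $\overline{C}^{\varphi}(k_\varphi)=u^{(m-1)}(k_\varphi)\Tr(E^{\varphi})$ is a nonzero scalar multiple of a basis vector of $Z(\mathbb{C}G)$ and the conclusion is immediate. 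As written, your proposal is incomplete at its final step, and completing that step honestly shows the corollary requires this extra condition.
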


This is our answer. 
We provide some sections for the details.

\section{\bf{Capelli identity and Capelli element}}

Here, we review the Capelli identity and the Capelli element.

\subsection{Column determinant}
First, we explain the column determinant. 
Let $R$ be an associative algebra.

\begin{definition}[Column determinant]\label{def:2.1.1}
Let $A = (a_{i j})_{1 \leq i, j \leq m} \in \Mat(m, R)$. 
We define the column determinant of $A$ by 
$$
\det{A} = \sum_{\sigma \in S_{m}} \sgn(\sigma) a_{\sigma(1) 1} a_{\sigma(2) 2} \cdots a_{\sigma(m) m}. 
$$
\end{definition}

Hence, we have 
$
\det{
\begin{bmatrix}
a & b \\ 
c & d
\end{bmatrix}
} = ad - cb$.

\subsection{Weyl algebra}
The Capelli identity is analogous to the product formula for the determinant in the Weyl algebra. 
Next, we explain the Weyl algebra $\mathbb{C}[x_{ij}, \partial_{kl} \: \vert \: 1 \leq i, j, k, l \leq m]$.

Let $x_{ij} \: (1 \leq i, j \leq m)$ be variables and 
$\partial_{ij} = \frac{\partial}{\partial x_{ij}} \: (1 \leq i, j \leq m)$ partial differential operators. 
We assume that these variables and operators are related as follows.

For all $1 \leq i, j, k, l \leq m$, we have 
\begin{align*}
[x_{ij}, x_{kl}] = 0, \quad 
[\partial_{ij}, \partial_{kl}] = 0, \quad 
[\partial_{i j}, x_{kl}] = \alpha \delta_{i k} \delta_{j l} 
\end{align*}
where $\delta$ is the Kronecker delta. 
Usually, we take $\alpha = 1$. 
Here, we will not assume that $\alpha = 1$. 
The Weyl algebra is generated by these variables and operators.

\subsection{Capelli identity}

Next, we explain the Capelli identity. 
Let 
\begin{align*}
&X = \left( x_{i j} \right)_{1 \leq i \leq m, 1 \leq j \leq m}, 
&&\partial = \left( \partial_{ij} \right)_{1 \leq i \leq m, 1 \leq j \leq m}, \quad \\ 
&\varPi = {}^{t}\!X \partial, 
&&\natural_{m} = \diag(m-1, m-2, \ldots, 0) 
\end{align*}

The Capelli identity is as follows.

\begin{thm}[Capelli identity]\label{thm:2.3.1}
We have 
\begin{align*}
\det{\left( \varPi + \alpha \natural_{m} \right)} = \det{X} \det{\partial}. 
\end{align*}
\end{thm}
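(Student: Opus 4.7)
The plan is to establish the identity by direct expansion of the right-hand side, reducing it to the column determinant on the left via the commutation relation $[\partial_{ij}, x_{kl}] = \alpha\delta_{ik}\delta_{jl}$. An induction on $m$ obtained by expanding the column determinant along the last column looks tempting, but the resulting $(m-1)\times(m-1)$ minors are not themselves of the form $\varPi' + \alpha\natural_{m-1}$, so the direct route is cleaner.

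First I would write
$$
\det(X)\det(\partial) = \sum_{\sigma,\tau \in S_m} \sgn(\sigma)\sgn(\tau)\, x_{\sigma(1)1}\cdots x_{\sigma(m)m}\, \partial_{\tau(1)1}\cdots\partial_{\tau(m)m},
$$
and regroup each summand into a product of $\varPi$-type entries $\varPi_{pq} = \sum_k x_{kp}\partial_{kq}$. The natural pairing takes the $i$-th factor $x_{\sigma(i)i}$ with the $\partial$-factor sharing the same row index $\sigma(i)$; since $\tau$ is a permutation, there is a unique $j$ with $\tau(j) = \sigma(i)$. To interleave the $x$'s and $\partial$'s so that $\varPi$-entries appear, I would pass each $\partial_{\tau(j)j}$ leftward through the block of $x$'s, picking up a correction $\alpha\delta_{\tau(j)\sigma(i)}\delta_{ji}$ at each crossing.

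Grouping by the subset $T \subseteq \{1,\ldots,m\}$ of indices that are \emph{not} contracted (i.e.\ whose $\partial$ is commuted past all remaining $x$'s without retaining a commutator term), the surviving $x$-$\partial$ pairs on $T$ reassemble, after the substitution $\rho = \sigma\tau^{-1}$ restricted to $T$, into an alternating sum of products of $\varPi$-entries indexed by $T$. The contracted indices contribute factors of $\alpha$ together with diagonal constraints forced by the two Kronecker deltas. After the signed sum over $\sigma$ and $\tau$, off-diagonal contraction contributions cancel by antisymmetry, and the surviving diagonal contribution at position $(i,i)$ has multiplicity $m-i$, producing exactly the shift $\alpha\natural_m = \alpha\diag(m-1,m-2,\ldots,0)$.

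The main obstacle will be the final combinatorial step: verifying that the multiplicities along the diagonal are \emph{exactly} $m-i$, rather than some other pattern. The asymmetric shape of $\natural_m$ mirrors the asymmetric convention of the column determinant (factors are ordered by column index), and matching it demands a careful argument, either a sign-reversing involution on pairs of contractions or a secondary induction on $|T|$. This is the combinatorial core of Capelli's original insight and the one place where the proof cannot be reduced to formal manipulation alone.
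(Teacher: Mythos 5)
The paper itself offers no proof of Theorem~\ref{thm:2.3.1}: it is quoted as a classical result in a review section (the relevant proofs live in the cited literature, e.g.\ Umeda's paper on the proofs of the Capelli identities), so there is no in-paper argument to compare against. Judged on its own, your proposal has a genuine gap: the entire content of the theorem --- that after normal-ordering the only surviving correction terms assemble into exactly the diagonal shift $\alpha\,\diag(m-1,m-2,\ldots,0)$, with those precise multiplicities and with all off-diagonal corrections cancelling --- is deferred to an unspecified ``sign-reversing involution or secondary induction,'' and you say so yourself. That step \emph{is} the Capelli identity; a plan that reduces the theorem to ``the combinatorial core of Capelli's original insight'' has not proved it.

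There is also a structural problem with the direction of your expansion. You start from $\det X \det\partial$, which is already normal-ordered (all $x$'s left of all $\partial$'s), and try to reassemble individual monomials $x_{\sigma(i)i}\,\partial_{\tau(j)j}$ into entries $\varPi_{pq}=\sum_k x_{kp}\partial_{kq}$. But a single monomial carries one fixed row index $\sigma(i)$, whereas $\varPi_{pq}$ is a sum over all row indices $k$; the regrouping can only happen after summing over $\sigma$ and $\tau$, and the interaction of that resummation with $\sgn(\sigma)\sgn(\tau)$ and with the commutator corrections is exactly where the work lies --- your ``natural pairing'' does not produce $\varPi$-entries term by term. The cleaner route is the opposite one: expand the left-hand side, push all $\partial$'s to the right using $[\partial_{ki},x_{l\sigma(j)}]=\alpha\delta_{kl}\delta_{i\sigma(j)}$ (equivalently, use only the relations $[\varPi_{ij},\varPi_{kl}]=\alpha(\delta_{jk}\varPi_{il}-\delta_{il}\varPi_{kj})$, which is the viewpoint the paper itself emphasizes after Theorem~\ref{thm:2.4.3}), and run an induction on $m$ on a suitably generalized statement with arbitrary diagonal shifts so that the minors appearing in the column expansion stay within the inductive family. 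Your opening remark dismissing the inductive route is what the standard proofs actually do; the obstacle you cite (minors not of the form $\varPi'+\alpha\natural_{m-1}$) is resolved precisely by strengthening the induction hypothesis.
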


\begin{rei}\label{rei:2.3.2}
Let $m = 2$ and $\alpha = 1$. 
We have 
\begin{align*}
\det{
\begin{bmatrix} 
x_{11} \partial_{11} + x_{21} \partial_{21} + 1 & x_{11} \partial_{12} + x_{21} \partial_{22} \\ 
x_{12} \partial _{11} + x_{22} \partial_{21} & x_{21} \partial_{12} + x_{22} \partial_{12} 
\end{bmatrix}
} 
= \det{
\begin{bmatrix} 
x_{11} & x_{12} \\ 
x_{21} & x_{22} 
\end{bmatrix}
} 
\det{
\begin{bmatrix}
\partial_{11} & \partial_{12} \\ 
\partial_{21} & \partial_{22} 
\end{bmatrix}
}
\end{align*}
\end{rei}

\subsection{Capelli element}

The Capelli element is a charactrestic polynomial of $\varPi$. 
Let $z$ be a variable.

\begin{definition}[Capelli element]\label{def:2.4.1}
We define the Capelli element $C(z)$ by 
$$
C(z) = \det{\left( \varPi + \alpha \natural_{m} - z I_{m} \right)}. 
$$
\end{definition}

The Capelli identity is conjugation invariant.

\begin{thm}\label{thm:2.4.2}
For all $P \in GL(m, \mathbb{C})$, we have 
\begin{align*}
\det{\left( P \varPi P^{-1} + \alpha \natural_{m} - z I_{m} \right)} = C(z).
\end{align*}
\end{thm}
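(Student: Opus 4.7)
My plan is to realize the matrix-level conjugation $\varPi \mapsto P\varPi P^{-1}$ as a Weyl-algebra automorphism, and then reduce Theorem \ref{thm:2.4.2} to the Capelli identity (Theorem \ref{thm:2.3.1}).

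Given $P \in GL(m,\mathbb{C})$, I would first introduce new commuting variables by the matrix equation $Y := X\,{}^{t}P$, so that $y_{ij} = \sum_{k} x_{ik} P_{jk}$. The $y_{ij}$ form another set of generators of the polynomial subalgebra $\mathbb{C}[x_{ij}]$, and by the chain rule the associated partial derivatives $\partial^{y}_{ij}$ satisfy $\partial^{y} = \partial\,({}^{t}P)^{-1}$ together with the Weyl-algebra relations $[\partial^{y}_{ij}, y_{kl}] = \alpha \delta_{ik}\delta_{jl}$. The assignment $x \mapsto y$, $\partial \mapsto \partial^{y}$ therefore extends to an algebra automorphism $\phi_{P}$ of the Weyl algebra, and a direct matrix computation shows
\[
\varPi^{y} := {}^{t}Y\,\partial^{y} = {}^{t}(X\,{}^{t}P)\,\partial\,({}^{t}P)^{-1} = P\,{}^{t}X\,\partial\,P^{-1} = P\varPi P^{-1}.
\]
Hence $\phi_{P}$ sends $\varPi$ entry-by-entry to $P\varPi P^{-1}$, and since $\alpha\natural_m$ and $zI_m$ are scalar matrices it sends $\varPi + \alpha\natural_m - zI_m$ to $P\varPi P^{-1} + \alpha\natural_m - zI_m$. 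The theorem is therefore equivalent to the assertion that $\phi_{P}$ fixes $C(z)$.

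For $z = 0$ this is immediate: $C(0) = \det X \cdot \det \partial$ by Theorem \ref{thm:2.3.1}, and because the entries within $Y$ commute among themselves (and likewise those within $\partial^{y}$), their column determinants reduce to ordinary determinants, giving $\det Y = \det X \cdot \det P$ and $\det \partial^{y} = \det \partial / \det P$. The factors of $\det P$ cancel, so $\phi_{P}(C(0)) = C(0)$.

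The main obstacle is the shift $-zI_{m}$: the matrix $\varPi - (z/\alpha)I_{m}$ cannot itself be written as ${}^{t}X'\partial'$ for any Weyl generators $X', \partial'$, so Theorem \ref{thm:2.3.1} does not apply verbatim to the shifted matrix. My proposal is to expand the column determinant as a polynomial in the scalar $z$,
\[
C(z) = \sum_{S \subseteq \{1,\ldots,m\}} (-z)^{m - |S|}\,\det\bigl((\varPi + \alpha\natural_m)_{S}\bigr),
\]
where $(\cdot)_{S}$ denotes the principal submatrix in the column-determinant sense, and then show that $\phi_{P}$ fixes each summand. The delicate point is that principal submatrices of $P\varPi P^{-1}$ are \emph{not} conjugates of the corresponding submatrices of $\varPi$, so one cannot simply iterate the argument above; instead I would invoke a higher-minor Capelli identity that factors each $\det\bigl((\varPi + \alpha\natural_m)_{S}\bigr)$ as a sum of products $\det X_{\bullet} \cdot \det \partial_{\bullet}$ over $|S|\times|S|$ submatrices of $X$ and $\partial$, at which point the same $\det P/\det P$ cancellation as in the unshifted case applies term by term.
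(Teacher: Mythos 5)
Your reduction of the statement to the fixed-point property $\phi_{P}(C(z)) = C(z)$ for a Weyl-algebra automorphism is sound, and your $z=0$ case is correct: the Capelli identity applied to the generators $Y,\partial^{y}$ gives $\det(P\varPi P^{-1}+\alpha\natural_{m})=\det Y\,\det\partial^{y}=\det X\,\det\partial$, with the $\det P$ factors cancelling. (Minor slip: from $y_{ij}=\sum_{k}x_{ik}P_{jk}$ the chain rule gives $\partial^{y}=\partial\,P^{-1}$, not $\partial\,({}^{t}P)^{-1}$; your final formula $\varPi^{y}=P\varPi P^{-1}$ is nevertheless right.) The gap is in the step for general $z$. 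The expansion $C(z)=\sum_{S}(-z)^{m-|S|}\det\bigl((\varPi+\alpha\natural_{m})_{S}\bigr)$ is a valid identity for the column determinant, but the plan to show that $\phi_{P}$ fixes \emph{each summand} cannot work: already for $m=2$ and $S=\{1\}$ the summand is $\varPi_{11}+\alpha$, while its image under conjugation by the swap matrix is $\varPi_{22}+\alpha$. Only the sum over all $S$ of a fixed cardinality is invariant (for $|S|=1$ this is the trace). Moreover, the higher-minor Capelli identities you propose to invoke do not apply to these summands: they factor $k\times k$ minors of $\varPi$ shifted by $\alpha\,\diag(k-1,\dots,1,0)$, whereas the principal submatrix $(\varPi+\alpha\natural_{m})_{S}$ carries the restricted shifts $\alpha(m-i)$, $i\in S$, which agree with the required ones only when $S=\{m-k+1,\dots,m\}$. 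This mismatch is precisely why the classical expansion of $C(z)$ into minor Capelli sums is organized by falling-factorial polynomials in $z$ rather than by powers of $z$, and it is not repaired by the $\det P/\det P$ cancellation. As written, the argument establishes the theorem only at $z=0$.

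Separately, your route diverges from the paper's in a way that matters downstream. The paper derives Theorem \ref{thm:2.4.2} (together with Theorem \ref{thm:2.4.3}) from the commutation relations $[\varPi_{ij},\varPi_{kl}]=\alpha(\delta_{jk}\varPi_{il}-\delta_{il}\varPi_{kj})$ alone, i.e.\ from the $\mathfrak{gl}_{m}$-structure: one checks invariance of the column determinant under the infinitesimal adjoint action by row manipulations and exponentiates, with no reference to the realization $\varPi={}^{t}X\partial$. This independence from the Weyl realization is the whole point of the remark following Theorem \ref{thm:2.4.3}, because in Section 4 the same two theorems are applied to $E^{\varphi}$, whose entries satisfy the relations $(1)$ but are not of the form ${}^{t}X\partial$. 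A proof resting on the factorization $\det X\,\det\partial$ could not be recycled there even if the gap above were closed, so I would recommend redoing the argument purely from the commutator relations.
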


The following theorem plays an important role in what follows.

\begin{thm}\label{thm:2.4.3}
For all $1 \leq i, j \leq m$, we have 
\begin{align*}
[\varPi_{i j}, C(z)] = 0.
\end{align*}
\end{thm}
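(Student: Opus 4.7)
The plan is to deduce the commutator identity by differentiating the conjugation invariance of Theorem \ref{thm:2.4.2}. Fix indices $i, j$ and consider the one-parameter family $P(t) = I_m + t E_{ji}$, where $E_{ji}$ is the matrix unit. For small complex $t$ we have $P(t) \in GL(m,\mathbb{C})$, so Theorem \ref{thm:2.4.2} gives
\begin{equation*}
\det\bigl(P(t)\, \varPi\, P(t)^{-1} + \alpha \natural_m - z I_m\bigr) = C(z)
\end{equation*}
as an identity of polynomials in $t$ with coefficients in the Weyl algebra. Since the right-hand side is constant in $t$, the coefficient of $t$ on the left must vanish.

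Next I would expand the entries of $M(t) := P(t)\varPi P(t)^{-1} + \alpha\natural_m - z I_m$ using $P(t)\varPi P(t)^{-1} = \varPi + t[E_{ji},\varPi] + O(t^2)$, obtaining
\begin{equation*}
M(t)_{kl} = \varPi_{kl} + \alpha(m-k)\delta_{kl} - z\delta_{kl} + t\bigl(\delta_{kj}\varPi_{il} - \delta_{il}\varPi_{kj}\bigr) + O(t^2).
\end{equation*}
In parallel, a direct computation from the defining Weyl-algebra relations yields
\begin{equation*}
[\varPi_{ij}, \varPi_{kl}] = \alpha\bigl(\delta_{jk}\varPi_{il} - \delta_{il}\varPi_{kj}\bigr),
\end{equation*}
and the scalar shifts $\alpha(m-k)\delta_{kl}$ and $-z\delta_{kl}$ commute with $\varPi_{ij}$. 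Hence the derivation $[\varPi_{ij},-]$ acts on each matrix entry $M(0)_{kl}$ by precisely $\alpha$ times the $t$-linear coefficient of $M(t)_{kl}$.

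Finally, because the column determinant is a sum of monomials $\mathrm{sgn}(\sigma)\,M_{\sigma(1),1}M_{\sigma(2),2}\cdots M_{\sigma(m),m}$ with a \emph{fixed} left-to-right ordering, both the derivation $[\varPi_{ij},-]$ and the derivative $d/dt|_{t=0}$ distribute over each monomial by the same Leibniz rule. Combining this with the previous matching of coefficients gives
\begin{equation*}
[\varPi_{ij}, C(z)] = \alpha\,\frac{d}{dt}\bigg|_{t=0}\det M(t) = 0.
\end{equation*}
The main technical point is the termwise matching of the two Leibniz expansions: one must verify that differentiating the conjugated matrix entry and taking the bracket with $\varPi_{ij}$ act identically (up to the overall factor $\alpha$) on each factor of every permutation monomial, which is exactly why the column ordering of the determinant is essential.
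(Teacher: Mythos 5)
Your argument is correct, and it takes a genuinely different route from the one the paper has in mind. The paper gives no proof at all of Theorem \ref{thm:2.4.3}: it merely asserts that Theorems \ref{thm:2.4.2} and \ref{thm:2.4.3} both follow from the commutation relations $[\varPi_{ij},\varPi_{kl}]=\alpha(\delta_{jk}\varPi_{il}-\delta_{il}\varPi_{kj})$ alone, the implied proof being the standard direct computation in which one expands the column determinant and checks termwise that the bracket with each $\varPi_{ij}$ cancels. You instead obtain centrality as the \emph{infinitesimal form} of the conjugation invariance of Theorem \ref{thm:2.4.2}: differentiating $\det\bigl(P(t)\varPi P(t)^{-1}+\alpha\natural_m-zI_m\bigr)=C(z)$ along $P(t)=I_m+tE_{ji}$ and matching the $t$-linear coefficient $[E_{ji},\varPi]_{kl}=\delta_{kj}\varPi_{il}-\delta_{il}\varPi_{kj}$ with $\tfrac{1}{\alpha}[\varPi_{ij},\varPi_{kl}]$. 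The key observation --- that $\tfrac{d}{dt}\big|_{t=0}$ and $\tfrac{1}{\alpha}[\varPi_{ij},-]$ are derivations agreeing on the entries of $M(0)$, hence on any fixed-order noncommutative polynomial in those entries such as the column determinant --- is sound, and your index computations check out. What your approach buys is a short, conceptual proof that makes the equivalence ``$GL_m$-invariance $\Leftrightarrow$ annihilation by the infinitesimal generators'' explicit. What it costs is a logical dependence: the argument is only non-circular if Theorem \ref{thm:2.4.2} is established \emph{without} appeal to Theorem \ref{thm:2.4.3}. Since centrality is often proved first in the literature and conjugation invariance deduced from it by exponentiating (the reverse of your implication), you should state explicitly that you are taking Theorem \ref{thm:2.4.2} as given with an independent proof. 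Two cosmetic points: for $i=j$ the family $P(t)=I_m+tE_{ii}$ has inverse involving $(1+t)^{-1}$, so the left-hand side is a rational (not polynomial) function of $t$ near $t=0$, which is all you need; and the column ordering is not strictly ``essential'' --- any fixed ordering of the monomial factors would do --- what matters is only that $\det M(t)$ is a fixed noncommutative polynomial in the entries.
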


Theorem~$\ref{thm:2.4.2}$ and $\ref{thm:2.4.3}$ are obtained from only the following relations.

For all $1 \leq i, j, k, l \leq m$, 
$$
[\varPi_{i j}, \varPi_{k l}] = \alpha (\delta_{j k} \varPi_{i l} - \delta_{i l} \varPi_{k j}). 
$$

\section{\bf{Capelli identity for irreducible representations}}
Here, we explain the Capelli identities for irreducible representations.

Let $G$ be a finite group, 
$x_{g} \: (g \in G)$ variable and 
$\partial_{g} = \frac{\partial}{\partial x_{g}} \: (g \in G)$ partial differential operator. 
We assume that the following relations hold.

For all $g, h \in G$, 
\begin{align*}
[x_{g}, x_{h}] = 0, \quad [\partial_{g}, \partial_{h}] = 0, \quad [\partial_{g}, x_{h}] = \delta_{g h}. 
\end{align*}
Then, we have the Weyl algebra $\mathbb{C}[x_{g}, \partial_{h}]$. 
Next, we construct Weyl subalgebras of the Weyl algebra by using irreducible unitary representations of $G$.

Let $|G|$ be the cardinality of the set $G$ (that is, $|G|$ is the order of the group $G$), 
$\varphi$ a unitary matrix form of an irreducible representation of $G$, 
\begin{align*}
\alpha = \frac{|G|}{m}, \quad 
X^{\varphi} = \sum_{g \in G} \overline{\varphi(g)} x_{g}, \quad 
\partial^{\varphi} = \sum_{g \in G} \varphi(g) \partial_{g}, {\rm and} \quad
\varPi^{\varphi} = {}^{t}\!X^{\varphi} \partial^{\varphi} 
\end{align*}
where $\overline{\varphi(g)}$ is the complex conjugate matrix of $\varphi(g)$. 
Then, we have the following relations.

For all $1 \leq i, j, k, l \leq m$, 
\begin{align*}
[X^{\varphi}_{i j}, X^{\varphi}_{k l}] = 0, \quad [\partial^{\varphi}_{i j}, \partial^{\varphi}_{k l}] = 0, \quad [\partial^{\varphi}_{i j}, X^{\varphi}_{k l}] = \alpha \delta_{i k} \delta_{j l}. 
\end{align*}

This leads us to the following identity.

\begin{thm}[Capelli identity for irreducible representations]\label{thm:3.1.1}
We have 
\begin{align*}
\det{\left( \varPi^{\varphi} + \alpha \natural_{m} \right)} = \det{X^{\varphi}} \det{\partial^{\varphi}}. 
\end{align*}
\end{thm}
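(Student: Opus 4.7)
My plan is to reduce the identity to the classical Capelli identity (Theorem~\ref{thm:2.3.1}) by exhibiting the matrices $X^{\varphi}$ and $\partial^{\varphi}$ as generators of a Weyl-algebra-type subalgebra of $\mathbb{C}[x_{g}, \partial_{h}]$ whose entries satisfy exactly the commutation relations used in Section~2.2, with the same scalar $\alpha$.

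The first step is to verify the three stated commutation relations. The vanishing of $[X^{\varphi}_{ij}, X^{\varphi}_{kl}]$ and $[\partial^{\varphi}_{ij}, \partial^{\varphi}_{kl}]$ is immediate from the commutativity of the $x_{g}$'s among themselves and of the $\partial_{g}$'s among themselves. For the mixed bracket, expanding in the defining sums gives
\[
[\partial^{\varphi}_{ij}, X^{\varphi}_{kl}] = \sum_{g,h \in G} \varphi(g)_{ij} \overline{\varphi(h)_{kl}} \, [\partial_{g}, x_{h}] = \sum_{g \in G} \varphi(g)_{ij} \overline{\varphi(g)_{kl}},
\]
and Schur's orthogonality relations for the unitary irreducible representation $\varphi$ evaluate this sum to $\tfrac{|G|}{m}\delta_{ik}\delta_{jl} = \alpha \delta_{ik}\delta_{jl}$, matching the Weyl algebra relation used in Theorem~\ref{thm:2.3.1}.

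The second step is to transfer the classical identity. I would define an algebra homomorphism from the Weyl algebra $\mathbb{C}[x_{ij}, \partial_{kl} \mid 1 \leq i,j,k,l \leq m]$ of Section~2.2 into the ambient algebra $\mathbb{C}[x_{g}, \partial_{h}]$ by $x_{ij} \mapsto X^{\varphi}_{ij}$ and $\partial_{ij} \mapsto \partial^{\varphi}_{ij}$; its well-definedness is precisely the content of the previous step. Under this map the $(i,j)$-entry of $\varPi + \alpha \natural_{m}$ is sent to the $(i,j)$-entry of $\varPi^{\varphi} + \alpha \natural_{m}$, and because the column determinant is a fixed ordered product of matrix entries, the homomorphism carries $\det(\varPi + \alpha \natural_{m})$ to $\det(\varPi^{\varphi} + \alpha \natural_{m})$ and $\det(X) \det(\partial)$ to $\det(X^{\varphi}) \det(\partial^{\varphi})$. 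Applying the homomorphism to both sides of Theorem~\ref{thm:2.3.1} then yields the claim.

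The one point worth emphasizing is the role of unitarity: without the complex conjugate in the definition of $X^{\varphi}$, the mixed commutator would not collapse to a scalar multiple of $\delta_{ik}\delta_{jl}$ under Schur orthogonality, and the reduction to the Weyl algebra setting would fail. Once the commutation relations are established with the correct constant $\alpha$, no new combinatorial work is needed beyond the classical Capelli identity itself.
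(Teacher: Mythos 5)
Your proposal is correct and matches the paper's (largely implicit) argument: the paper likewise establishes $[\partial^{\varphi}_{ij}, X^{\varphi}_{kl}] = \alpha\,\delta_{ik}\delta_{jl}$ via Schur orthogonality for the unitary representation $\varphi$ and then invokes the classical Capelli identity of Theorem~\ref{thm:2.3.1}, which depends only on these commutation relations. Your explicit treatment of the transfer homomorphism and of the role of unitarity simply spells out what the paper leaves to the reader.
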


Let $C^{\varphi}(z) = \det{\left( \varPi^{\varphi} + \alpha \natural_{m} - z I_{m} \right)}$ be the Capelli element. 
From Theorem~$\ref{thm:2.4.2}$, the Capelli element is invariant under a change of a matrix form of the irreducible representation. 
This enables us to redefine the Capelli element as follows.

\begin{definition}[Capelli element for irreducible representations]\label{def:3.1.2}
Let $\varphi \in \widehat{G}$ and $m = \deg{\varphi}$. 
We define $C^{\varphi}(z)$ by 
$$
C^{\varphi}(z) = \det{\left( \varPi^{\varphi} + \alpha \natural_{m} - z I_{m} \right)}
$$
We call $C^{\varphi}(z)$ the Capelli element for $\varphi$. 
\end{definition}

\section{\bf{Capelli element of the group algebra}}

Let $\mathbb{C}G = \left\{ \sum_{g \in G} x_{g} g \: \vert \: x_{g} \in \mathbb{C} \right\}$ the group algebra of $G$, 
$\widetilde{G}$ a complete set of irreducible unitary matrix representations of $G$, 
$\varphi \in \widetilde{G}$, 
and 
$$
E^{\varphi} = \sum_{g \in G} \varphi(g) g \in \Mat(\deg{\varphi}, \mathbb{C}G). 
$$
From Schur's orthogonal relations, we have the following lemmas.

\begin{lem}\label{lem:4.1.1}
$\{ E^{\varphi}_{i j} \: \vert \: 1 \leq i, j \leq \deg{\varphi}, \varphi \in \widetilde{G} \}$ is a basis of $\mathbb{C}G$. 
\end{lem}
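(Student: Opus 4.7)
The plan is to prove Lemma~\ref{lem:4.1.1} by a dimension count combined with an explicit inversion formula obtained from Schur's orthogonal relations.

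First I would observe that the cardinality of the proposed set equals $\sum_{\varphi \in \widetilde{G}} (\deg \varphi)^2 = |G| = \dim_{\mathbb{C}} \mathbb{C}G$, which is the standard consequence of the decomposition of the regular representation. Consequently, it suffices to show either linear independence or spanning; I would aim for the latter because the inversion formula is explicit and also fixes the coefficients of an arbitrary element once and for all, which is useful later in the paper.

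Next, I would apply the ``second form'' of Schur's orthogonal relations for unitary irreducible matrix representations of a finite group, namely
\begin{equation*}
\sum_{\varphi \in \widetilde{G}} \deg \varphi \sum_{1 \le i,j \le \deg \varphi} \overline{\varphi(g)_{ij}}\, \varphi(h)_{ij} = |G|\, \delta_{gh} \qquad (g, h \in G).
\end{equation*}
Using this, a direct computation gives
\begin{equation*}
\frac{1}{|G|} \sum_{\varphi \in \widetilde{G}} \deg \varphi \sum_{1 \le i,j \le \deg \varphi} \overline{\varphi(g)_{ij}}\, E^{\varphi}_{ij} = \sum_{h \in G} h \cdot \frac{1}{|G|} \sum_{\varphi, i, j} \deg \varphi\, \overline{\varphi(g)_{ij}}\, \varphi(h)_{ij} = g,
\end{equation*}
so every group element, and hence every element of $\mathbb{C}G$, lies in the span of the $E^{\varphi}_{ij}$.

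Combined with the dimension count, this proves the lemma. The only nontrivial ingredient is Schur orthogonality in its ``column sum'' form, which is standard; there is no real obstacle, just the bookkeeping of indices $\varphi$, $i$, $j$, and the conjugation convention. I would make sure to state explicitly that the representations are unitary (so $\overline{\varphi(g)} = {}^{t}\varphi(g^{-1})$ could be used as an alternative if needed) and that the dimension identity $\sum_{\varphi} (\deg \varphi)^2 = |G|$ is being invoked.
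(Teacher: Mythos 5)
Your proposal is correct and matches the paper's (unwritten) argument: the paper simply asserts that Lemma~\ref{lem:4.1.1} follows from Schur's orthogonal relations, and your combination of the dimension count $\sum_{\varphi}(\deg\varphi)^2=|G|$ with the completeness relation for matrix coefficients is a standard and valid way to make that precise. One small remark: since the completeness (``second'') relation is itself often derived from the fact that the matrix coefficients form a basis of the function space on $G$, it is slightly cleaner to get linear independence directly from the first orthogonality relation (the functions $g\mapsto\varphi(g)_{ij}$ are pairwise orthogonal, hence the vectors $E^{\varphi}_{ij}$ are linearly independent) and then conclude by the dimension count, but your route is equally acceptable.
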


\begin{lem}\label{lem:4.1.2}
Let $\varphi, \psi \in \widetilde{G}$, where $\varphi$ is not equivalent to $\psi$. 
For all $1 \leq i, j \leq \deg{\varphi}$ and $1 \leq s, t \leq \deg{\psi}$, we have 
\begin{align*}
E^{\varphi}_{ij} E^{\varphi}_{kl} = \alpha_{\deg{\varphi}} \delta_{jk} E_{il}, \quad E^{\varphi}_{ij} E^{\psi}_{st} = 0. 
\end{align*}
In particular, we have
\begin{align}
[E^{\varphi}_{i j}, E^{\varphi}_{k l}] &= \alpha_{\deg{\varphi}} (\delta_{j k} E_{i l} - \delta_{i l} E_{k j}), \\ 
[E^{\varphi}_{i j}, E^{\psi}_{k l}] &= 0. 
\end{align}
\end{lem}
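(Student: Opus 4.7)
The plan is to reduce both product identities to Schur's orthogonality relations, after which the commutator identities follow as immediate corollaries. The key observation is that, since each $\varphi \in \widetilde{G}$ is unitary, $\varphi(g^{-1})_{ab} = \overline{\varphi(g)_{ba}}$, so the standard orthogonality relations (stated in their complex-conjugate form) translate cleanly into statements about sums involving $\varphi(g)$ and $\varphi(g^{-1})$.

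First I would expand
\begin{align*}
E^{\varphi}_{ij} E^{\varphi}_{kl} = \sum_{g,h \in G} \varphi(g)_{ij}\,\varphi(h)_{kl}\, gh
\end{align*}
and reparametrize by setting $x = gh$, so that $h = g^{-1}x$. Applying the representation property $\varphi(g^{-1}x)_{kl} = \sum_{r} \varphi(g^{-1})_{kr}\,\varphi(x)_{rl}$ and the unitary identity $\varphi(g^{-1})_{kr} = \overline{\varphi(g)_{rk}}$, the coefficient of $x$ becomes $\sum_{r} \varphi(x)_{rl} \sum_{g \in G} \varphi(g)_{ij}\,\overline{\varphi(g)_{rk}}$. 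Schur's first orthogonality relation collapses the inner sum to $\alpha\,\delta_{ir}\delta_{jk}$ with $\alpha = |G|/\deg\varphi$, yielding $E^{\varphi}_{ij} E^{\varphi}_{kl} = \alpha_{\deg\varphi}\,\delta_{jk}\, E^{\varphi}_{il}$.

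The identity $E^{\varphi}_{ij} E^{\psi}_{st} = 0$ follows by the same manipulation but with Schur's second orthogonality relation replacing the first: the inequivalence of $\varphi$ and $\psi$ forces $\sum_{g \in G} \varphi(g)_{ij}\,\overline{\psi(g)_{rs}} = 0$ for every $r$, so every coefficient of $x$ vanishes. Finally, the two bracket formulas are straightforward consequences: subtracting the product in the opposite order gives $[E^{\varphi}_{ij}, E^{\varphi}_{kl}] = \alpha_{\deg\varphi}(\delta_{jk} E^{\varphi}_{il} - \delta_{il} E^{\varphi}_{kj})$, while $[E^{\varphi}_{ij}, E^{\psi}_{kl}] = 0$ because each one-sided product is already zero. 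I do not anticipate a substantive obstacle; the only care required is bookkeeping of the index transposition and complex conjugation introduced by the unitarity condition, and ensuring the reparametrization is applied consistently in both products.
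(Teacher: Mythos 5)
Your argument is correct and is precisely the one the paper intends: the paper offers no written proof of this lemma, merely asserting that it follows from Schur's orthogonality relations, and your expansion via the reparametrization $x = gh$, the unitarity identity $\varphi(g^{-1})_{kr} = \overline{\varphi(g)_{rk}}$, and the two orthogonality relations supplies exactly the missing details. The commutator formulas then follow from the product formulas just as you say.
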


Let 
$$
\overline{C}^{\varphi}(z) = \det{\left( E^{\varphi} + \alpha_{m} \natural_{m} - z I_{m} \right)} \in \mathbb{C}[z] \otimes \mathbb{C}G. 
$$ 
Recall that Theorem~$\ref{thm:2.4.2}$ and Theorem~$\ref{thm:2.4.3}$ are obtained from only the relations $[\varPi_{i j}, \varPi_{k l}] = \alpha (\delta_{j k} \varPi_{i l} - \delta_{i l} \varPi_{k j})$. 
Hence, $\overline{C}^{\varphi}(z)$ is conjugation invariant from the relations~$(1)$, 
and we have 
\begin{align}
[E^{\varphi}_{ij}, \overline{C}^{\varphi}(z)] = 0
\end{align}
for any $1 \leq i, j \leq \deg{\varphi}$.

Using the above conjugation invariance, we redefine $\overline{C}^{\varphi}(z)$.

\begin{definition}[Capelli element of the group algebra]\label{def:4.1.3}
Let $\varphi \in \widehat{G}$. 
We define the Capelli element for $\varphi$ of the group algebra by 
$$
\overline{C}^{\varphi}(z) = \det{\left( E^{\varphi} + \alpha_{m} \natural_{m} - z I_{m} \right)}. 
$$ 
\end{definition}

From Lemma~$\ref{lem:4.1.1}$ and relations~$(1)$ and $(2)$, we can prove the following Lemma.

\begin{lem}\label{lem:4.1.4}
For all $\varphi \in \widehat{G},\ \overline{C}^{\varphi}(z) \in Z(\mathbb{C}G)$. 
That is, $\overline{C}^{\varphi}(z)$ is a central element of the group algebra. 
\end{lem}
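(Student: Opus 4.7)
The plan is to verify that $\overline{C}^{\varphi}(z)$ commutes with every element of $\mathbb{C}G$ by checking commutation against a well-chosen basis. By Lemma~\ref{lem:4.1.1}, the family $\{E^{\psi}_{kl} \mid 1 \leq k, l \leq \deg{\psi},\ \psi \in \widetilde{G}\}$ is a basis of $\mathbb{C}G$, so by linearity it suffices to prove $[E^{\psi}_{kl}, \overline{C}^{\varphi}(z)] = 0$ for every $\psi \in \widetilde{G}$ and every admissible pair $(k, l)$.

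The key observation is that $\overline{C}^{\varphi}(z)$ is, by its very definition, a polynomial (with coefficients in $\mathbb{C}[z]$) in the matrix entries $E^{\varphi}_{ij}$, $1 \leq i, j \leq \deg{\varphi}$. This reduces the question to understanding how each $E^{\psi}_{kl}$ commutes with each $E^{\varphi}_{ij}$, after which the commutator with the polynomial $\overline{C}^{\varphi}(z)$ is assembled by the Leibniz rule.

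I would then split into two cases according to whether $\psi$ is equivalent to $\varphi$ or not.

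\emph{Case 1: $\psi \not\sim \varphi$.} By relation $(2)$ of Lemma~\ref{lem:4.1.2}, $E^{\psi}_{kl}$ commutes with every $E^{\varphi}_{ij}$. Since commuting with every generator implies commuting with any polynomial built from those generators, we immediately obtain $[E^{\psi}_{kl}, \overline{C}^{\varphi}(z)] = 0$.

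\emph{Case 2: $\psi = \varphi$ (after matching the matrix form, using the conjugation invariance of $\overline{C}^{\varphi}(z)$ noted right after Definition~\ref{def:4.1.3}).} This is exactly the content of relation~$(3)$, which the excerpt has already established from relation~$(1)$ as the group-algebra analogue of Theorem~\ref{thm:2.4.3}. So $[E^{\varphi}_{kl}, \overline{C}^{\varphi}(z)] = 0$.

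The main obstacle, if any, is really bookkeeping rather than content: one must be careful to remember that $\overline{C}^{\varphi}(z)$ depends only on the variables $E^{\varphi}_{ij}$, so Case~1 truly follows from mere commutativity of generators, and Case~2 is handled wholesale by the already-proven centrality statement~$(3)$. Combining the two cases and invoking Lemma~\ref{lem:4.1.1} to extend to all of $\mathbb{C}G$ by linearity completes the proof.
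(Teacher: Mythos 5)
Your proof is correct and follows exactly the route the paper indicates: the paper gives no written-out proof but states that the lemma follows from Lemma~\ref{lem:4.1.1} together with the relations, which is precisely your decomposition into the basis $\{E^{\psi}_{kl}\}$, using relation~$(2)$ for $\psi\not\sim\varphi$ and the already-established relation~$(3)$ for $\psi=\varphi$. Nothing to add.
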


Let $u_{i}(z) = \alpha_{m} (m - i) - z$, 
$u^{(i)}(z) = u_{m}(z) u_{m - 1}(z) \cdots u_{m - i + 1}(z)$, 
$E^{\varphi}_{ij}(u_{i}(z)) = E^{\varphi}_{ij} + \delta_{ij} u_{i}(z)$, 
and $[m] = \{ 1, 2, \ldots, m \}$. 
The following is the main theorem.

\begin{thm}\label{thm:4.1.5}
We have 
$$
\overline{C}^{\varphi}(z) = u^{(m)}(z) + \Tr{\left( E^{\varphi} \right)} u^{(m - 1)}(z)
$$
\end{thm}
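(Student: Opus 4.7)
The plan is to expand the column determinant defining $\overline{C}^{\varphi}(z)$ directly and then collapse the resulting expression using the multiplication rule $E^{\varphi}_{ij} E^{\varphi}_{kl} = \alpha \delta_{jk} E^{\varphi}_{il}$ from Lemma~\ref{lem:4.1.2}. Writing $M = E^{\varphi} + \alpha \natural_m - z I_m$, so that $M_{ii} = E^{\varphi}_{ii} + u_i(z)$ and $M_{ij} = E^{\varphi}_{ij}$ for $i \neq j$, only fixed points of a permutation $\sigma$ carry the scalar $u_j(z)$; at each such fixed point I would split $M_{jj}$ into its operator and scalar summands. Reorganizing by first choosing the subset $T \subseteq [m]$ of positions where the scalar summand is taken, the surviving permutations are precisely those fixing every element of $T$, i.e.\ the permutations of $S := [m] \setminus T$. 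This gives
\[
\overline{C}^{\varphi}(z) = \sum_{T \subseteq [m]} \left(\prod_{i \in T} u_i(z)\right) \det\left(E^{\varphi}_{ij}\right)_{i,j \in S},
\]
reducing the computation to principal-submatrix column determinants of $E^{\varphi}$.

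The key lemma is then that for $S = \{s_1 < \cdots < s_k\}$ nonempty,
\[
\det\left(E^{\varphi}_{ij}\right)_{i,j \in S} = (-\alpha)^{k-1}\, E^{\varphi}_{s_k, s_k}.
\]
To establish this I would apply $E^{\varphi}_{ij} E^{\varphi}_{kl} = \alpha \delta_{jk} E^{\varphi}_{il}$ repeatedly to each product $E^{\varphi}_{\sigma(s_1),s_1} E^{\varphi}_{\sigma(s_2),s_2} \cdots E^{\varphi}_{\sigma(s_k),s_k}$; the chain of Kronecker deltas forces $\sigma(s_{i+1}) = s_i$ for $i = 1, \ldots, k-1$, or else the product vanishes. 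Since $\sigma$ restricts to a bijection of $S$, this forces $\sigma(s_1) = s_k$ as well, so $\sigma|_S$ must be the single $k$-cycle $(s_1\ s_k\ s_{k-1}\ \cdots\ s_2)$ of sign $(-1)^{k-1}$, and the surviving product evaluates to $\alpha^{k-1} E^{\varphi}_{s_k, s_k}$. This is the main obstacle: one must verify that no permutation other than this distinguished cycle contributes, and that the surviving entry indeed lands at $(\max S, \max S)$.

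The remainder is bookkeeping. Grouping by $\ell := \max S$ and $k := |S|$, the other $k-1$ elements of $S$ form a $(k-1)$-subset of $\{1, \ldots, \ell-1\}$; summing over those choices produces the elementary symmetric polynomial $e_{\ell-k}(u_1(z), \ldots, u_{\ell-1}(z))$, while the indices above $\ell$ contribute $\prod_{i=\ell+1}^{m} u_i(z)$. The coefficient of $E^{\varphi}_{\ell \ell}$ in $\overline{C}^{\varphi}(z)$ therefore becomes
\[
\left(\prod_{i=\ell+1}^{m} u_i(z)\right) \sum_{k=1}^{\ell} (-\alpha)^{k-1} e_{\ell-k}\left(u_1(z), \ldots, u_{\ell-1}(z)\right).
\]
The inner sum is precisely the expansion of $\prod_{i=1}^{\ell-1}(u_i(z) - \alpha)$, which equals $\prod_{i=2}^{\ell} u_i(z)$ since $u_i(z) - \alpha = u_{i+1}(z)$; multiplying by $\prod_{i=\ell+1}^{m} u_i(z)$ gives $u^{(m-1)}(z)$, independent of $\ell$. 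Combined with the $T = [m]$ contribution $u^{(m)}(z)$ and summed over $\ell$ to assemble $\Tr(E^{\varphi})$, this yields the claimed identity.
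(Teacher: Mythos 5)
Your proposal is correct. The first half coincides with the paper's argument: both expand the column determinant over subsets of positions where the scalar summand is taken, and both collapse each surviving product via $E^{\varphi}_{ij}E^{\varphi}_{kl}=\alpha\delta_{jk}E^{\varphi}_{il}$ to get the contribution $(-\alpha)^{|S|-1}E^{\varphi}_{\max S,\max S}\prod_{i\notin S}u_i(z)$, with the same identification of the unique surviving cycle. Where you diverge is in evaluating the coefficient of each $E^{\varphi}_{\ell\ell}$. The paper avoids that computation entirely: it first writes $\overline{C}^{\varphi}(z)=\sum_i a_iE^{\varphi}_{ii}+u^{(m)}(z)$, then invokes the centrality of $\overline{C}^{\varphi}(z)$ (Lemma~\ref{lem:4.1.4}) together with $E^{\varphi}_{pq}E^{\varphi}_{ii}=\alpha\delta_{iq}E^{\varphi}_{pi}$ to force $a_p=a_q$ for all $p,q$, and finally computes only $a_1$, which is trivial because the only subset with maximum $1$ is $\{1\}$. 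You instead compute every $a_\ell$ directly, summing over all $S$ with $\max S=\ell$, recognizing the elementary symmetric polynomial $e_{\ell-k}(u_1(z),\dots,u_{\ell-1}(z))$, and using the telescoping relation $u_i(z)-\alpha=u_{i+1}(z)$ to show the sum collapses to $u^{(m-1)}(z)$ independently of $\ell$; I checked the generating-function identity $\sum_{k=1}^{\ell}(-\alpha)^{k-1}e_{\ell-k}=\prod_{i=1}^{\ell-1}(u_i-\alpha)$ and it is right. Your route is more computational but entirely self-contained, needing only the multiplication rule of Lemma~\ref{lem:4.1.2}; the paper's route is shorter at the diagonal-coefficient stage but leans on the centrality lemma, which in turn depends on relations $(1)$--$(3)$. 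A side benefit of your calculation is that it exhibits \emph{why} all the diagonal coefficients coincide, rather than deducing it from symmetry.
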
 
\begin{proof}
From the definition of $\overline{C}^{\varphi}(z)$, we have 
\begin{align*}
\overline{C}^{\varphi}(z) 
&= \sum_{\sigma \in S_{m}} \sgn(\sigma) E^{\varphi}_{\sigma(1) 1}(u_{1}(z)) E^{\varphi}_{\sigma(2) 2}(u_{2}(z)) \cdots E^{\varphi}_{\sigma(m) m}(u_{m}(z)) \\ 
&= \sum_{\sigma \in S_{m}} \sgn(\sigma) (E^{\varphi}_{\sigma(1) 1} + \delta_{\sigma(1) 1} u_{1}(z)) \cdots (E^{\varphi}_{\sigma(m) m} + \delta_{\sigma(m) m} u_{m}(z)) \\ 
&= \sum_{\sigma \in S_{m}} \sgn(\sigma) \sum_{T \subset [m]} \prod_{t \in T} E^{\varphi}_{\sigma(t) t} \prod_{s \in [m] \setminus T} \delta_{\sigma(s) s} u_{s}(z). 
\end{align*}
Let $T = \{ t_{1} < t_{2} < \ldots < t_{|T|} \}$ and $[m] \setminus T = \{ s_{1}, s_{2}, \ldots, s_{m - |T|} \}$. 
From Lemma~$\ref{lem:4.1.2}$, we have 
\begin{align*}
&\sum_{\sigma \in S_{m}} \sgn(\sigma) \sum_{T \subset [m]} \prod_{t \in T} E^{\varphi}_{\sigma(t) t} \prod_{s \in [m] \setminus T} \delta_{\sigma(s) s} u_{s}(z) \\ 
&\quad = \alpha_{m}^{|T| - 1} \sum_{\sigma \in S_{m}} \sgn(\sigma) \delta_{\sigma(t_{2}) t_{1}} \delta_{\sigma(t_{3}) t_{2}} \cdots \delta_{\sigma(t_{|T|}) t_{|T| - 1}} E^{\varphi}_{\sigma(t_{1}) t_{|T|}} \prod_{s \in [m] \setminus T} \delta_{\sigma(s) s} u_{s}(z) \\ 
&\quad = (- \alpha_{m})^{|T| - 1} E^{\varphi}_{t_{|T|} t_{|T|}} \prod_{s \in [m] \setminus T} u_{s}(z). 
\end{align*}
Therefore, there exists $a_{i} \in \mathbb{C}[z] \: (1 \leq i \leq m)$ such that 
\begin{align*}
\overline{C}^{\varphi}(z) = \sum_{i = 1}^{m} a_{i} E^{\varphi}_{ii} + u^{(m)}(z). 
\end{align*}
We show that $a_{p} = a_{q}$ for all $p, q \in [m]$. 
From Lemma~$\ref{lem:4.1.2}$, we have 
\begin{align*}
E^{\varphi}_{pq} \overline{C}^{\varphi}(z) 
&= E^{\varphi}_{pq} \left( \sum_{i = 1}^{m} a_{i} E^{\varphi}_{ii} + u^{(m)}(z) \right) \\ 
&= \sum_{i = 1}^{m} a_{i} \delta_{iq} \alpha_{m} E^{\varphi}_{pi} + u^{(m)}(z) E^{\varphi}_{pq} \\ 
&= a_{q} \alpha_{m} E^{\varphi}_{pq} + u^{(m)}(z) E^{\varphi}_{pq}, \\ 
\overline{C}^{\varphi}(z) E^{\varphi}_{pq} &= \left( \sum_{i = 1}^{m} a_{i} E^{\varphi}_{ii} + u^{(m)}(z) \right) E^{\varphi}_{pq} \\ 
&= \sum_{i = 1}^{m} a_{i} \delta_{ip} \alpha_{m} E^{\varphi}_{iq} + u^{(m)}(z) E^{\varphi}_{pq} \\ 
&= a_{p} \alpha_{m} E^{\varphi}_{pq} + u^{(m)}(z) E^{\varphi}_{pq}. 
\end{align*}
From Lemma~$\ref{lem:4.1.4}$, we have $a_{p} = a_{q}$ for all $p, q \in [m]$. 
We calculate $a_{1}$. 
From $\overline{C}^{\varphi}(z) = \sum_{T \subset [m]} (- \alpha_{m})^{|T|-1} E^{\varphi}_{t_{|T|} t_{|T|}} \prod_{s \in [m] \setminus T} u_{s}(z)$, 
we have 
\begin{align*}
a_{1} E^{\varphi}_{11} &= (- \alpha_{m})^{\{ 1 \} - 1} E^{\varphi}_{11} \prod_{s \in [m] \setminus \{ 1 \}} u_{s}(z) \\ 
&= u^{(m-1)}(z) E^{\varphi}_{11}. 
\end{align*}
This completes the proof. 
\end{proof}

In addition, we have the following corollary.

\begin{cor}\label{cor:4.1.6}
Suppose $k_{\varphi} \in \mathbb{C}$ such that $u^{(m - 1)}(k_{\varphi}) \neq 0$. 
Then, 
$$
\left\{ \overline{C}^{\varphi}(k_{\varphi}) \: \vert \: \varphi \in \widehat{G} \right\}
$$
is a basis of $Z(\mathbb{C}G)$. 
\end{cor}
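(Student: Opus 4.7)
The plan is to use Theorem~\ref{thm:4.1.5} to reduce the corollary to the basis $\{\Tr{(E^{\varphi})} \mid \varphi \in \widehat{G}\}$ already exhibited in Theorem~\ref{thm:1.1.1}. Evaluating Theorem~\ref{thm:4.1.5} at $z = k_{\varphi}$ yields
\[
\overline{C}^{\varphi}(k_{\varphi}) = u^{(m)}(k_{\varphi}) \cdot 1_{G} + u^{(m - 1)}(k_{\varphi}) \cdot \Tr{(E^{\varphi})},
\]
and each such element lies in $Z(\mathbb{C}G)$ by Lemma~\ref{lem:4.1.4}. Since $|\widehat{G}|$ equals the number of conjugacy classes of $G$, and hence equals $\dim Z(\mathbb{C}G)$, it is enough to prove linear independence of $\{\overline{C}^{\varphi}(k_{\varphi}) \mid \varphi \in \widehat{G}\}$.

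For the linear-independence step, I would expand everything in the basis $\{\Tr{(E^{\psi})}\}_{\psi \in \widehat{G}}$ furnished by Theorem~\ref{thm:1.1.1}. The column-orthogonality relation $\sum_{\varphi} \deg{(\varphi)}\,\chi_{\varphi}(g) = |G|\,\delta_{g,e}$ rewrites the identity as
\[
1_{G} = \frac{1}{|G|} \sum_{\psi \in \widehat{G}} \deg{(\psi)} \, \Tr{(E^{\psi})},
\]
so the transition matrix $M$ defined by $\overline{C}^{\varphi}(k_{\varphi}) = \sum_{\psi} M_{\varphi,\psi}\,\Tr{(E^{\psi})}$ reads
\[
M_{\varphi, \psi} = \frac{\deg{(\psi)}}{|G|} \, u^{(m)}(k_{\varphi}) + \delta_{\varphi, \psi} \, u^{(m - 1)}(k_{\varphi}).
\]
Equivalently, $M = D + \mathbf{u}\mathbf{v}^{\top}$, where $D = \diag\!\bigl(u^{(m - 1)}(k_{\varphi})\bigr)_{\varphi}$, $\mathbf{u}_{\varphi} = u^{(m)}(k_{\varphi})$, and $\mathbf{v}_{\psi} = \deg{(\psi)}/|G|$.

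The diagonal matrix $D$ is invertible by the standing hypothesis, so the matrix-determinant lemma applies and, using the factorisation $u^{(m)}(z) = u_{1}(z)\,u^{(m - 1)}(z)$, gives
\[
\det M = \left( \prod_{\varphi} u^{(m - 1)}(k_{\varphi}) \right) \left( 1 + \frac{1}{|G|} \sum_{\varphi} \deg{(\varphi)} \, u_{1}(k_{\varphi}) \right).
\]
The first factor is nonzero by hypothesis. The main obstacle is then verifying that the rank-one correction $1 + |G|^{-1} \sum_{\varphi} \deg{(\varphi)}\,u_{1}(k_{\varphi})$ does not vanish; this is the delicate step, since it is not immediate from the stated condition on $u^{(m-1)}(k_{\varphi})$. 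The argument is completed by checking this scalar for the $k_{\varphi}$ at hand, for instance via the canonical choice $k_{\varphi} = \alpha(m - 1)$, which forces $u^{(m)}(k_{\varphi}) = 0$ and reduces each $\overline{C}^{\varphi}(k_{\varphi})$ to a nonzero scalar multiple of $\Tr{(E^{\varphi})}$, making the basis property transparent.
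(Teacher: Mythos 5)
Your reduction via Theorem~\ref{thm:4.1.5} and Theorem~\ref{thm:1.1.1}, the identity $1_{G} = \frac{1}{|G|}\sum_{\psi}\deg(\psi)\,\Tr(E^{\psi})$, the transition matrix $M = D + \mathbf{u}\mathbf{v}^{\top}$, and the evaluation of $\det M$ by the matrix-determinant lemma are all correct. (For what it is worth, the paper states this corollary without proof, so there is no argument of the author's to compare against; the implicit argument is presumably that each $\overline{C}^{\varphi}(k_{\varphi})$ is a nonzero multiple of $\Tr(E^{\varphi})$ plus a scalar, which overlooks exactly the point you isolate, namely that the scalar $u^{(m)}(k_{\varphi})\cdot 1_{G}$ is itself a linear combination of the $\Tr(E^{\psi})$.)

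The step you flag as delicate is a genuine gap, and it cannot be closed: the factor $1 + |G|^{-1}\sum_{\varphi}\deg(\varphi)\,u_{1}(k_{\varphi})$ really can vanish under the stated hypothesis, so the corollary as written is false. Concretely, take $G = \mathbb{Z}/2\mathbb{Z} = \{e, s\}$ with its two linear characters and $k_{\chi_{0}} = k_{\chi_{1}} = 1$; here $m = 1$, the hypothesis $u^{(0)}(k) = 1 \neq 0$ is vacuously satisfied, yet $\overline{C}^{\chi_{0}}(1) = s$ and $\overline{C}^{\chi_{1}}(1) = -s$ are linearly dependent (even more simply, for the trivial group $\overline{C}(1) = 0$). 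Your closing move of specializing to $k_{\varphi} = \alpha(m - 1)$, which forces $u^{(m)}(k_{\varphi}) = 0$, proves only that \emph{some} admissible choice of the $k_{\varphi}$ yields a basis; it does not establish the corollary for every admissible choice, which is what the statement asserts. The honest conclusion of your computation is that the corollary requires the additional hypothesis $1 + |G|^{-1}\sum_{\varphi}\deg(\varphi)\,u_{1}(k_{\varphi}) \neq 0$ (automatic when, for instance, every $u^{(m)}(k_{\varphi}) = 0$), and your determinant formula is precisely the right way to state and prove that corrected version.
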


\section{\bf{Relationship between column, row and double determinant}}
In this last section, we explain the relationship between column, row, and double determinants. 
The row and double determinants are as follows.

\begin{definition}[Row determinant]\label{def:..}
Let $A = (a_{ij})_{1 \leq i, j \leq m} \in \Mat(m, R)$. 
We define the row determinant of $A$ is defined as 
$$
\rdet{A} = \sum_{\sigma \in S_{m}} \sgn(\sigma) a_{1 \sigma(1)} a_{2 \sigma(2)} \cdots a_{m \sigma(m)}. 
$$
\end{definition}

\begin{definition}[Double determinant]\label{def:..}
Let $A = (a_{ij})_{1 \leq i, j \leq m} \in \Mat(m, R)$. 
The double determinant of $A$ is defined as
$$
\Det{A} = \frac{1}{m!} \sum_{\sigma, \tau \in S_{m}} \sgn(\sigma \tau) a_{\sigma(1) \tau(1)} a_{\sigma(2) \tau(2)} \cdots a_{\sigma(m) \tau(m)}. 
$$
\end{definition}

Reference \cite{M} describes that the relationship between column, row, and double determinants. 
Let 
\begin{align*}
\natural^{*} = 
\begin{bmatrix} 
0 & & & \\ 
 & 1 & & \\ 
 & & \ddots & \\ 
& & & m-1 
\end{bmatrix}, \quad 
\natural_{\sigma} = 
\begin{bmatrix}
\sigma(m) & & & \\ 
& \sigma(m-1) & & \\ 
& & \ddots & \\ 
& & & \sigma(1) 
\end{bmatrix} \quad (\sigma \in S_{m}), 
\end{align*}
and $E \in \Mat(m, R)$, where we assume that $\left[E_{ij}, E_{kl} \right] = \delta_{jk} E_{il} - \delta_{il} E_{kj}$ for all $1 \leq i, j, k, l \leq m$. 
We can prove the follwoing theorem.

\begin{thm}[\cite{M}]\label{thm:..}
For all $\sigma \in S_{m}$, we have 
\begin{align*}
\det{\left( E + \natural_{m} - z I_{m} \right)} 
&= \rdet{\left( E + \natural^{*} - z I_{m} \right)} \\ 
&= \Det{ \left( E + \natural_{\sigma} - (z + 1) I_{m} \right)}. 
\end{align*} 
\end{thm}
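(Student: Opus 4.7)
The plan is to handle the two equalities separately. For $\det(E + \natural_m - zI) = \rdet(E + \natural^* - zI)$, I would use the ``anti-diagonal reversal'' $\widetilde{E}_{ij} := E_{m+1-i,\,m+1-j}$. A direct check shows that $\widetilde{E}$ satisfies the same commutation relation as $E$, and this reversal turns the diagonal $\natural_m = \diag(m-1,\ldots,0)$ into $\natural^* = \diag(0,\ldots,m-1)$. Reversing the order of the $m$ factors inside the Leibniz sum for the column determinant of $E + \natural_m - zI$ then converts it into the row determinant of $\widetilde{E} + \natural^* - zI$; combining with conjugation invariance (the analogue of Theorem~\ref{thm:2.4.2} applied to the present $E$) identifies this with $\rdet(E + \natural^* - zI)$. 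The sign bookkeeping goes through because conjugation by the longest element $w_0 \in S_m$ preserves the sign.

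For the second equality, $\det(E + \natural_m - zI) = \Det(E + \natural_\sigma - (z+1)I)$, my first move is to simplify the double determinant. Substituting $\gamma = \alpha\beta^{-1}$ in its definition (with $\alpha, \beta$ the two summation indices) gives
\[
\Det(A) = \frac{1}{m!} \sum_{\beta \in S_m} \sum_{\gamma \in S_m} \sgn(\gamma)\, a_{\gamma(\beta(1)),\beta(1)} \cdots a_{\gamma(\beta(m)),\beta(m)},
\]
so $\Det(A)$ is the average over $\beta$ of the column determinant of $A$ with columns processed in the order $\beta(1), \ldots, \beta(m)$. The task thus reduces to showing that this ``$\beta$-ordered'' column determinant of $E + \natural_\sigma - (z+1)I$, averaged over $\beta$, coincides with $\det(E + \natural_m - zI)$. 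The shift by $(z+1)$ in place of $z$ and the freedom to choose any $\sigma$ are designed to absorb the normal-ordering corrections that appear when one rearranges the noncommuting factors.

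The main obstacle is the combinatorics of these normal-ordering corrections. Each commutator $[E_{ij}, E_{kl}] = \delta_{jk} E_{il} - \delta_{il} E_{kj}$ that arises when reordering factors produces lower-degree terms, and one must show that the sum of these corrections over all $\beta \in S_m$ reproduces exactly the $-I$ adjustment while cancelling the $\sigma$-dependence. A cleaner path I would try first is a centrality-plus-coefficient-matching argument: the element $\det(E + \natural_m - zI)$ lies in the center of the algebra generated by the $E_{ij}$ (by the analogue of Theorem~\ref{thm:2.4.3}), and $\Det(E + \natural_\sigma - (z+1)I)$ is manifestly invariant under the simultaneous symmetrization of rows and columns, hence should also be central. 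If both sides can be shown to be polynomials in $z$ of degree $m$ with the same leading coefficient, matching a single additional coefficient (say the coefficient of $z^{m-1}$, which is a linear function of the traces of powers of $E$) would force equality and bypass the full combinatorial estimate.
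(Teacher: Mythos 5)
The paper does not actually prove this theorem; it is quoted from Itoh--Umeda \cite{M}, so your attempt has to stand on its own, and as written it has a real gap in each half. For the first equality, the flip $\widetilde{E}_{ij}=E_{m+1-i,\,m+1-j}$ does preserve the commutation relations and does carry $\natural_{m}$ to $\natural^{*}$, but the step ``reversing the order of the factors converts the column determinant into the row determinant of the flipped matrix'' is not correct as stated: writing it out, $\rdet(\widetilde{A})=\sum_{\tau}\sgn(\tau)\,A_{m\,\tau(m)}\cdots A_{1\,\tau(1)}$ is the \emph{reversed-order row} determinant of $A$, not the reversed-order column determinant, and in a noncommutative ring the substitution $\sigma\mapsto\sigma^{-1}$ that would interchange $\sum_{\sigma}\sgn(\sigma)\,a_{\sigma(m)m}\cdots a_{\sigma(1)1}$ and $\sum_{\sigma}\sgn(\sigma)\,a_{m\sigma(m)}\cdots a_{1\sigma(1)}$ scrambles the factor order again. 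More importantly, even after repairing this you would still need the fact that the shifted column determinant $\det(E+\natural_{m}-zI_{m})$ is unchanged when its $m$ factors are multiplied in the opposite order (with the shift adjusted accordingly); that order/column-permutation invariance is precisely the nontrivial content of the identity and is what \cite{M} proves by tracking the commutator corrections created by transposing adjacent columns (the unit gaps in $\natural_{m}$ are what make those corrections cancel). You would also need a conjugation invariance for $\rdet(E+\natural^{*}-zI_{m})$, whereas Theorem~\ref{thm:2.4.2} is only available for the column determinant with shift $\natural_{m}$.

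For the second equality, your rewriting of $\Det$ as the average over $\beta$ of column determinants with the columns processed in the order $\beta(1),\dots,\beta(m)$ is correct and is the right first move, but you explicitly leave the normal-ordering combinatorics open, and the proposed shortcut does not close the gap. Two central, monic, degree-$m$ polynomials in $z$ that agree in their $z^{m-1}$ coefficients need not coincide: the theorem is stated for an arbitrary $E$ satisfying $[E_{ij},E_{kl}]=\delta_{jk}E_{il}-\delta_{il}E_{kj}$, i.e.\ essentially in $U(\mathfrak{gl}_{m})$, where the Capelli determinant has $m$ algebraically independent nontrivial coefficients (the Capelli generators of the center), so matching the top two coefficients determines nothing about the rest. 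Coefficient matching could succeed in the special case $E=E^{\varphi}$ of Theorem~\ref{thm:4.1.5}, where only $u^{(m)}(z)$ and $\Tr(E^{\varphi})\,u^{(m-1)}(z)$ survive, but not for the general statement here. In both halves the missing ingredient is the same lemma --- invariance of the shifted column determinant under reordering the columns --- and that is where all the work lies.
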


The above has the following implication.

\begin{cor}\label{cor:..}
Let $\overline{C}^{\varphi}(z)$ be the Capelli element for $\varphi$ of the group algebra. 
For all $\sigma \in S_{m}$, we have 
\begin{align*}
\overline{C}^{\varphi}(z) 
&= \rdet{\left( E^{\varphi} + \alpha_{m} \natural^{*} - z I_{m} \right)} \\ 
&= \Det{ \left( E^{\varphi} + \alpha_{m} \natural_{\sigma} - (z + 1) I_{m} \right)}. 
\end{align*} 
\end{cor}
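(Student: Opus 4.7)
The plan is to deduce the corollary from Theorem~\ref{thm:..} by a rescaling argument, using the fact that the commutation relations for $E^{\varphi}$ differ from the hypothesis of Theorem~\ref{thm:..} only by a uniform factor of $\alpha_{m}$. The cited theorem applies to any matrix $E$ whose entries satisfy $[E_{ij}, E_{kl}] = \delta_{jk} E_{il} - \delta_{il} E_{kj}$, whereas Lemma~\ref{lem:4.1.2}(1) gives $[E^{\varphi}_{ij}, E^{\varphi}_{kl}] = \alpha_{m}(\delta_{jk} E^{\varphi}_{il} - \delta_{il} E^{\varphi}_{kj})$, so the matrix $E^{\varphi}$ itself does not directly satisfy the hypothesis. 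A simple normalization repairs this.

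First I would set $\widetilde{E}_{ij} = \alpha_{m}^{-1} E^{\varphi}_{ij}$ and check that the entries of $\widetilde{E}$ satisfy the unit-coefficient bracket relation required by Theorem~\ref{thm:..}; this is an immediate computation from Lemma~\ref{lem:4.1.2}(1). Then I would apply Theorem~\ref{thm:..} to $\widetilde{E}$ with a spectral parameter $w$, obtaining, for each $\sigma \in S_{m}$,
\begin{align*}
\det\!\left(\widetilde{E} + \natural_{m} - w I_{m}\right)
&= \rdet\!\left(\widetilde{E} + \natural^{*} - w I_{m}\right) \\
&= \Det\!\left(\widetilde{E} + \natural_{\sigma} - (w+1) I_{m}\right).
\end{align*}

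Next I would pull the common scalar $\alpha_{m}$ out of each matrix by invoking the homogeneity of the three determinants: for any $m \times m$ matrix $A$ and any scalar $c$ central in the coefficient ring, $\det(cA) = c^{m} \det(A)$, and the same identities hold for $\rdet$ and $\Det$ directly from their defining sums. Multiplying through by $\alpha_{m}^{m}$ and substituting $z = \alpha_{m} w$ converts each of the three expressions above into the corresponding one in the corollary, with $\widetilde{E}$ replaced by $E^{\varphi}$ and the diagonal shifts $\natural_{m}, \natural^{*}, \natural_{\sigma}$ each multiplied by $\alpha_{m}$. The left-hand side is then exactly $\overline{C}^{\varphi}(z)$ by Definition~\ref{def:4.1.3}, giving the $\det$ and $\rdet$ equalities at once.

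The only subtle point, and what I would flag as the main obstacle, is tracking the constant shift in the double-determinant expression: the rescaling converts the $(w+1)I_{m}$ inside the $\Det$ into $(\alpha_{m} w + \alpha_{m}) I_{m} = (z + \alpha_{m}) I_{m}$. This matches the stated $\Det(E^{\varphi} + \alpha_{m} \natural_{\sigma} - (z+1) I_{m})$ exactly in the normalization $\alpha_{m} = 1$ but differs otherwise, so I would either interpret the ``$+1$'' in the corollary as $+\alpha_{m}$ (consistent with how the other diagonal shifts scale) or verify by a direct expansion of $\Det$ that absorbing a scalar into $\natural_{\sigma}$ produces exactly this shift. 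Once that bookkeeping is settled, no further work is required, since Theorem~\ref{thm:..} does the substantive combinatorial lifting.
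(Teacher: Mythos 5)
Your proposal is correct and follows the route the paper intends: the paper offers no explicit proof of this corollary, presenting it simply as an ``implication'' of Theorem~\ref{thm:..} via the commutation relations of Lemma~\ref{lem:4.1.2}, and your normalization $\widetilde{E} = \alpha_{m}^{-1}E^{\varphi}$ together with the homogeneity of $\det$, $\rdet$ and $\Det$ is exactly the bookkeeping needed to make that implication precise. Your flagged concern about the double-determinant shift is a genuine catch rather than an obstacle in your argument: carrying out the rescaling honestly yields $\Det\left( E^{\varphi} + \alpha_{m}\natural_{\sigma} - (z+\alpha_{m})I_{m}\right)$, so the $(z+1)$ in the corollary as printed appears to be a typo for $(z+\alpha_{m})$ (the two agree only when $\deg\varphi = |G|$), and your proof establishes the corrected statement.
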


\clearpage

\thanks{\bf{Acknowledgments}}
I am deeply grateful to Prof. Hiroyuki Ochiai, Prof. T\^oru Umeda and Prof. Minoru Itoh who provided the helpful comments and suggestions. 
Also, I would like to thank my colleagues in the Graduate School of Mathematics of Kyushu University, 
in particular Yuka Suzuki for comments and suggestions. 
This work was supported by a grant from the Japan Society for the Promotion of Science (JSPS KAKENHI Grant Number 15J06842).

\medskip
\begin{flushleft}
Naoya Yamaguchi\\
Graduate School of Mathematics\\
Kyushu University\\
Nishi-ku, Fukuoka 819-0395 \\
Japan\\
n-yamaguchi@math.kyushu-u.ac.jp
\end{flushleft}

\end{document}